\documentclass[12pt, reqno]{amsart}
\usepackage{amsmath, amsthm, amscd, amsfonts, amssymb, graphicx, color}
\usepackage[bookmarksnumbered, colorlinks, plainpages]{hyperref}
\hypersetup{colorlinks=true,linkcolor=red, anchorcolor=green, citecolor=cyan, urlcolor=red, filecolor=magenta, pdftoolbar=true}

\textheight 22.5truecm \textwidth 15.5truecm
\setlength{\oddsidemargin}{0.35in}\setlength{\evensidemargin}{0.35in}

\setlength{\topmargin}{-.5cm}

\newtheorem{theorem}{Theorem}[section]
\newtheorem{lemma}[theorem]{Lemma}
\newtheorem{prop}[theorem]{Proposition}
\newtheorem{cor}[theorem]{Corollary}
\theoremstyle{definition}

\theoremstyle{remark}
\newtheorem{remark}[theorem]{\bf{Remark}}
\numberwithin{equation}{section}
\allowdisplaybreaks
\begin{document}

\title[ Sharper bounds for the numerical radius   ]  
{ Sharper bounds for the numerical radius of $\MakeLowercase{n} \times \MakeLowercase{n}$ operator matrices II    }

\author[P. Bhunia ]{  Pintu Bhunia  }

\address{Pintu Bhunia, Department of Mathematics, Indian Institute of Science, Bengaluru 560012, Karnataka, India}
\email{pintubhunia5206@gmail.com; pintubhunia@iisc.ac.in}

\thanks{Dr. Pintu Bhunia  would like to thank SERB, Govt. of India for the financial support in the form of National Post Doctoral Fellowship (N-PDF, File No. PDF/2022/000325) under the mentorship of Prof. Apoorva Khare}

\subjclass[2020]{47A12, 47A30, 15A60, 47A63}
\keywords{Numerical radius, Operator norm, Operator matrix, Contraction operator}

\date{}
\maketitle
\begin{abstract}
 Let $A=[A_{ij}]$ be an $n\times n$ operator matrix where each $A_{ij}$ is a bounded linear operator on a complex Hilbert space $\mathcal{H}$. With other numerical radius bounds via contraction operators, we show that 
    $w(A) \leq  w(\Tilde{A}),$
     where $\Tilde{A}=[a_{ij}]$ is an $n\times n$ complex matrix with
     \begin{eqnarray*}
         a_{ij}=\begin{cases}
        w(A_{ii}) \quad \text{if } i=j\\
\underset{0\leq t \leq 1}{\min}     \left\| |A_{ij}|^{2t} +   |A_{ji}^*|^{2t}  \right\|^{1/2} 
        \left\| |A_{ij}^*|^{2(1-t)}+ |A_{ji}|^{2(1-t)}  \right\|^{1/2}  \quad \text{if } i< j\\
       0 \quad \text{if } i> j.
    \end{cases}
     \end{eqnarray*}
     This bound refines the well known bound $w(A) \leq  w(\hat{A}),$
     where $\hat{A}=[\hat{a}_{ij}]$ is an $n\times n$ matrix with 
$\hat{a}_{ij}= w(A_{ii}) $ \text{if } $i=j$ and $\hat{a}_{ij}= \|A_{ij}\|  $  \text{if } $i\neq  j$ [Linear Algebra Appl. 468 (2015), 18--26]. 
 We deduce that if $A$, $B$ are bounded linear operators on $\mathcal{H},$ then
    \begin{eqnarray*}
        w\left(\begin{bmatrix}
			0&A\\
			B&0
		\end{bmatrix}\right) \leq  \frac12 \left\| |A|^{2t} +   |B^*|^{2t}  \right\|^{1/2} 
        \left\| |A^*|^{2(1-t)}+ |B|^{2(1-t)}  \right\|^{1/2}  \quad \text{for all } t\in [0,1].
    \end{eqnarray*}
     Further by applying the numerical radius bounds of operator matrices, we deduce some numerical radius bounds for a single operator, the product  of two operators, the commutator of operators. 
We show that if $A$ is a bounded linear operator on $\mathcal{H},$ then
$w(A) \leq \frac12  \|A\|^t  \left\|  |A|^{1-t}+|A^*|^{1-t}  \right\| \quad \text{for all } t\in [0,1],$ which refines as well as generalizes the existing ones.

\end{abstract}



\section{Introduction}

\noindent  
Let $\mathcal{B}(\mathcal{H})$ denote the $C^*$-algebra of all bounded linear operators on a complex Hilbert space  $\mathcal{H}$.
If $\mathcal{H}$ is an $n$-dimensional Hilbert space, then $\mathcal{B}(\mathcal{H})$ is identified with $\mathcal{M}_n(\mathbb{C}),$ the set of all $n\times n$ complex matrices.
For $A\in \mathcal{B}(\mathcal{H})$, let $|A|=(A^*A)^{1/2}$ and $|A^*|=(AA^*)^{1/2},$ where $A^*$ is the Hilbert adjoint of $A.$ Let $ r(A), \|A\|$ and $w(A)$ denote the spectral radius, the operator norm and the numerical radius of $A,$ respectively. Recall that 
$\|A\|=\sup\left\{ \| Ax\|: x\in \mathcal{H}, \|x\|=1  \right\}$ and
$w(A)=\sup\left\{ |\langle Ax,x\rangle|: x\in \mathcal{H}, \|x\|=1  \right\}.$
The numerical radius, $w(\cdot): \mathcal{B}(\mathcal{H})\to \mathbb{R}$ defines a norm on $\mathcal{B}(\mathcal{H})$ and it satisfies $\frac12 \|A\|\leq w(A)\leq \|A\|,$ for every $A\in \mathcal{B}(\mathcal{H}).$
Here $w(A)=\|A\|=r(A)$ if $A$ is normal and $w(A)=\frac12 \|A\|$ if $A^2=0.$ A complete characterization of $w(A)=\frac12 \|A\|$ is given in \cite{Bhunia_LAA-C}.
Analogous to the operator norm, the numerical radius satisfies $w(A^n) \leq w^n(A)$, for every positive integer $n$, and $w(A^n) = w^n(A)$ if $A$ is normal. For further readings on the numerical radius, we refer to \cite{book, book2}. 
It is well known that (\cite[p. 44]{Horn} and \cite{PMSC}) if $A=[a_{ij}]$ is an $n\times n$ complex matrix such that $a_{ij}\geq 0$ for all $i,j=1,2,\ldots,n$, then 
	\begin{eqnarray}\label{pp0}
	    w(A)=\frac12 w\left({A+A^*}\right)=\frac12r\left({A+A^*}\right).
	\end{eqnarray} 
Therefore, the spectral radius monotonicity of matrices with non-negative entries (see in \cite[p. 491]{Horn2} implies that if $A=\begin{bmatrix}
	a_{ij}
\end{bmatrix}$ and $B=\begin{bmatrix}
b_{ij}
\end{bmatrix}$ are $n\times n$ complex matrices with $0\leq a_{ij}\leq b_{ij}$ for all $i,j=1,2,\ldots,n,$ then $w(A) \leq w(B).$

\noindent Let $A_{ij}\in \mathcal{B}(\mathcal{H})$ for all $i,j=1,2,\ldots,n.$ Then  $A=[A_{ij}]\in \mathcal{B}\left(\oplus_{i=1}^n\mathcal{H}\right)$ is an $n\times n$  operator matrix. The operator matrices, a useful tool to study Hilbert space operators, have been studied  over the years, see \cite{Halmos}. In 1995, Hou and Du \cite{Hou_1995} proved that 
\begin{eqnarray}\label{p7}
	w\left(A \right) \leq w(\Tilde{A}) \quad \text{where } \Tilde{A}=[a_{ij}]\in \mathcal{M}_n(\mathbb{C}) \text{ with } a_{ij}=\|A_{ij}\|.
\end{eqnarray}
Then, in 2015, Abu-Omar and Kittaneh \cite{Abu_LAA_2015} provided a refinement of \eqref{p7}, namely, $w\left(A \right) \leq w(\Tilde{A}) \quad \text{where } \Tilde{A}=[a_{ij}]\in \mathcal{M}_n(\mathbb{C}) \text{ with }$
\begin{eqnarray}\label{p8}
	 a_{ij}=  \begin{cases}
	    w(A_{ii}) \quad \text{ if } i=j\\
     \|A_{ij}\| \quad \text{ if } i\neq j.
	\end{cases}
\end{eqnarray}
Recently, in 2024,  Bhunia \cite[Corollary 2.4]{Bhunia-AdM} developed a refinement of \eqref{p8}, namely, $w\left(A \right) \leq w(\Tilde{A}) \quad \text{where } \Tilde{A}=[a_{ij}]\in \mathcal{M}_n(\mathbb{C}) \text{ with }$
\begin{eqnarray}\label{p--0}
      a_{ij}=  \begin{cases}
	    w(A_{ii}) \quad \text{ if } i=j\\
     \left\| |A_{ij}| +|A_{ji}^*| \right \|^{1/2} \left\| |A_{ji}|+|A_{ij}^*| \right\|^{1/2} \quad \text{ if } i< j\\
     0 \quad \text{ if } i> j.
	\end{cases}
\end{eqnarray}
In this paper, we obtain a refinement of \eqref{p--0}. Among other numerical radius bounds, we prove that  $w\left(A \right) \leq w(\Tilde{A}) \quad \text{where } \Tilde{A}=[a_{ij}]\in \mathcal{M}_n(\mathbb{C}) \text{ with }$
\begin{eqnarray*}\label{}
     a_{ij}=\begin{cases}
        w(A_{ii}) \quad \text{if } i=j\\
      \Big(  \left\| |A_{ij}|^{1/2} |K_{ij}| |A_{ij}|^{1/2} + |A_{ji}^*|^{1/2} |K_{ji}^*| |A_{ji}^*|^{1/2}\right\|^{1/2} \\ 
       \quad \times \left\| |A_{ij}^*|^{1/2} |K_{ij}^*| |A_{ij}^*|^{1/2}+ |A_{ji}|^{1/2} |K_{ji}| |A_{ji}|^{1/2}  \right\|^{1/2} \Big) \quad \text{if } i< j\\
       0 \quad \text{if } i> j,
    \end{cases}
\end{eqnarray*}
where $K_{ij}$, for all $i,j=1,2,\ldots, n$, are contractions satisfy $A_{ij}= |A_{ij}^*|^{1/2} K_{ij} |A_{ij}|^{1/2}$. We also deduce the numerical radius bounds for certain $2\times 2$ operator matrices.
   Applying the numerical radius bounds for operator matrices, we deduce  some new numerical radius bounds for a single operator, the product of two operators, the sum of the product of two pairs of operators and the commutator of operators. We consider computational examples to illustrate the results.
   

\section{Main results}

First we develop upper bounds for the numerical radius of $n\times n$ operator matrices which improve the bounds \eqref{p8} and \eqref{p--0}.  To achieve the results we need the following lemma.

\begin{lemma} \cite[Proposition 3.4]{Bhunia-S} \label{lem-1}
Let $A\in \mathcal B\left( \mathcal H \right)$ and let  $f,g$ be non-negative continuous functions on $\left[ 0,\infty  \right)$ such that $f\left( \lambda \right)g\left( \lambda \right)=\lambda$, $\lambda\geq 0$.  Then there exists a contraction $K$ (i.e., $\|K\|\leq 1$), where  $A= g(|A^*|) K f(|A|)$,  such that
$  \left| \left\langle Ax,y \right\rangle  \right|^2 \le {  \left\langle f\left( \left| A \right| \right){{\left| {{K}^{}} \right|}}f\left( \left| A \right| \right)x,x \right\rangle \left\langle g\left( \left| {{A}^{*}} \right| \right){{\left| K^* \right|}}g\left( \left| {{A}^{*}} \right| \right)y,y \right\rangle  } $ 
$ \text{for all } x,y\in \mathcal H. $ 
\end{lemma}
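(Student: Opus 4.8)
The plan is to derive the asserted bound from the elementary mixed Schwarz inequality for the contraction $K$, after first realizing $A$ in the factored form $A=g(|A^*|)\,K\,f(|A|)$ by means of the polar decomposition. First I would fix the polar decomposition $A=U|A|$, where $U$ is the partial isometry with initial space $\overline{\operatorname{ran}}\,|A|$ and final space $\overline{\operatorname{ran}}\,A$; thus $P:=U^*U$ is the orthogonal projection onto $(\ker A)^\perp$, one has $|A|P=|A|$, and $P$ commutes with every continuous function of $|A|$. The key structural identity is $|A^*|=U|A|U^*$, which extends to $h(|A^*|)=U\,h(|A|)\,U^*$ for every continuous $h$ with $h(0)=0$. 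Taking $K:=U$ (a contraction, indeed a partial isometry), and using that $f(\lambda)g(\lambda)=\lambda$ gives $g(|A|)f(|A|)=|A|$, I would verify
\[
g(|A^*|)\,U\,f(|A|)=U\,g(|A|)\,U^*U\,f(|A|)=U\,g(|A|)\,f(|A|)\,P=U\,|A|\,P=U|A|=A .
\]

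With the factorization in hand, the inequality follows from the basic mixed Schwarz inequality: for any $T\in\mathcal B(\mathcal H)$ with polar decomposition $T=W|T|$, writing $\langle Tu,v\rangle=\langle |T|^{1/2}u,\,|T|^{1/2}W^*v\rangle$ and applying Cauchy--Schwarz yields
\[
|\langle Tu,v\rangle|^2\le \langle |T|u,u\rangle\,\langle |T^*|v,v\rangle\qquad(u,v\in\mathcal H).
\]
I would apply this with $T=K$, $u=f(|A|)x$, and $v=g(|A^*|)y$. Since $f(|A|)$ and $g(|A^*|)$ are non-negative functions of self-adjoint operators, hence self-adjoint, one has $\langle Ax,y\rangle=\langle Kf(|A|)x,\,g(|A^*|)y\rangle$, and the two factors on the right-hand side rewrite as $\langle f(|A|)|K|f(|A|)x,x\rangle$ and $\langle g(|A^*|)|K^*|g(|A^*|)y,y\rangle$. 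This is exactly the asserted bound.

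The main obstacle is the factorization step, namely the clean construction of a genuine contraction $K$ realizing $A=g(|A^*|)\,K\,f(|A|)$. The identity $h(|A^*|)=U\,h(|A|)\,U^*$ requires $h(0)=0$, whereas the hypotheses guarantee only $f(0)g(0)=0$, so the behaviour at $\lambda=0$ (equivalently, the action on $\ker A$) must be handled with care---either by exploiting that $fg=\mathrm{id}$ forces the correct vanishing along $\overline{\operatorname{ran}}\,|A|$, or by approximating $f,g$ by functions vanishing at the origin and passing to a limit. Once the factorization is secured, the remaining application of the mixed Schwarz inequality and the rewriting of the two inner products are routine.
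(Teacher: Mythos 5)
The paper does not actually prove this lemma: it is imported verbatim from \cite[Proposition 3.4]{Bhunia-S}, so the only comparison available is with the standard argument behind that result, and that is precisely what you reconstruct. Your two steps --- (a) realize $A=g(|A^*|)\,K\,f(|A|)$ with $K=U$, the partial isometry from the polar decomposition $A=U|A|$, and (b) apply the mixed Schwarz inequality $|\langle Tu,v\rangle|^2\le\langle |T|u,u\rangle\,\langle |T^*|v,v\rangle$ with $T=K$, $u=f(|A|)x$, $v=g(|A^*|)y$, using the self-adjointness of $f(|A|)$ and $g(|A^*|)$ --- are correct and yield the stated bound verbatim, with the additional precision that for this $K$ one has $|K|=U^*U$ and $|K^*|=UU^*$.

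The one blemish is the intermediate identity in step (a): $g(|A^*|)=U\,g(|A|)\,U^*$ does require $g(0)=0$, and the hypotheses only give $f(0)g(0)=0$. But the obstacle you flag dissolves if you use the intertwining form rather than the conjugation form. From $U|A|=|A^*|U$ one gets $U|A|^n=|A^*|^nU$ for all $n\ge 1$, and constants commute with $U$ trivially, so $U\,p(|A|)=p(|A^*|)\,U$ for every polynomial $p$, hence $U\,h(|A|)=h(|A^*|)\,U$ for every continuous $h$ on $[0,\|A\|]$ --- no vanishing at $0$ is needed. Then
\begin{equation*}
g(|A^*|)\,U\,f(|A|)=U\,g(|A|)\,f(|A|)=U\,(gf)(|A|)=U|A|=A
\end{equation*}
in one line, with no projections to track and no approximation argument. (Alternatively, note that $f(0)g(0)=0$ forces at least one of $f,g$ to vanish at $0$, and the conjugation identity can be applied on whichever side vanishes; but the intertwining route is cleaner.) With this adjustment your proof is complete and is essentially the proof of the cited source.
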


We now obtain an upper bound for the numerical radius of $n\times n$ operator matrices which improves \eqref{p8}.

\begin{theorem}\label{th1}
    Let $A=[A_{ij}]$ be an $n\times n$ operator matrix where $A_{ij}\in \mathcal{B}(\mathcal{H})$ for all $i,j=1,2,\ldots, n.$ Let $f_{ij}$ and $g_{ij}$ be non-negative continuous functions on $\left[ 0,\infty  \right)$ such that $f_{ij}\left( \lambda \right)g_{ij}\left( \lambda \right)=\lambda$, $\lambda\geq 0$. Then there exist contractions $K_{ij}$, where $A_{ij}= g_{ij}(|A_{ij}^*|) K_{ij} f_{ij}(|A_{ij}|)$, such that
    $w(A) \leq  w(\Tilde{A}),$
    where $\Tilde{A}=[a_{ij}]\in \mathcal{M}_n(\mathbb{C})$  with
    $$a_{ij}=\begin{cases}
        w(A_{ii}) \quad \text{if } i=j\\
       \left\| f_{ij}(|A_{ij}|) |K_{ij}| f_{ij}(|A_{ij}|)\right\|^{1/2} \left\| g_{ij}(|A_{ij}^*|) |K_{ij}^*| g_{ij}(|A_{ij}^*|)\right\|^{1/2} \quad \text{if } i\neq j.
    \end{cases}$$
    \end{theorem}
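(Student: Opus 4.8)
The plan is to estimate $|\langle Ax,x\rangle|$ for an arbitrary unit vector $x=(x_1,\ldots,x_n)^T\in\oplus_{i=1}^n\mathcal{H}$, so that $\sum_{i=1}^n\|x_i\|^2=1$, and to reduce the problem to a scalar quadratic form governed by $\Tilde{A}$. First I would expand $\langle Ax,x\rangle=\sum_{i,j=1}^n\langle A_{ij}x_j,x_i\rangle$ and apply the triangle inequality, separating the diagonal from the off-diagonal terms. For the diagonal blocks the definition of the numerical radius gives $|\langle A_{ii}x_i,x_i\rangle|\leq w(A_{ii})\|x_i\|^2=a_{ii}\|x_i\|^2$.

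For each off-diagonal block $i\neq j$ I would invoke Lemma \ref{lem-1} applied to the single operator $A_{ij}$ with the prescribed functions $f_{ij},g_{ij}$ and the vectors $x_j,x_i$. This is exactly what produces the contractions $K_{ij}$ asserted in the statement, together with the estimate $|\langle A_{ij}x_j,x_i\rangle|^2\leq\langle f_{ij}(|A_{ij}|)|K_{ij}|f_{ij}(|A_{ij}|)x_j,x_j\rangle\,\langle g_{ij}(|A_{ij}^*|)|K_{ij}^*|g_{ij}(|A_{ij}^*|)x_i,x_i\rangle$. Since both middle operators are positive, bounding each inner product by the operator norm times the square of the relevant vector norm and taking square roots yields precisely $|\langle A_{ij}x_j,x_i\rangle|\leq a_{ij}\|x_i\|\|x_j\|$.

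Collecting these estimates gives $|\langle Ax,x\rangle|\leq\sum_{i,j=1}^n a_{ij}\|x_i\|\|x_j\|$, and I would recognize the right-hand side as $\langle\Tilde{A}u,u\rangle$, where $u=(\|x_1\|,\ldots,\|x_n\|)^T\in\mathbb{R}^n$ is a unit vector. Because all entries $a_{ij}$ and all coordinates of $u$ are non-negative, this quadratic form is a non-negative real number, hence $\langle\Tilde{A}u,u\rangle\leq|\langle\Tilde{A}u,u\rangle|\leq w(\Tilde{A})$. Taking the supremum over all unit vectors $x$ then delivers $w(A)\leq w(\Tilde{A})$.

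The argument is largely routine once Lemma \ref{lem-1} is available; the two points I would be careful about are, first, that the contractions $K_{ij}$ may be chosen independently for each block, since they come from applying the lemma to the operators $A_{ij}$ separately with no compatibility requirement, and second, the passage from the operator-level Cauchy–Schwarz bound to the scalar form, where the mixed norms $\|x_i\|\|x_j\|$ must be packaged correctly into the quadratic form of the non-negative matrix $\Tilde{A}$ so that the final bound by $w(\Tilde{A})$ is legitimate. I expect this reduction step to be the only place where a little verification is genuinely needed; everything else is direct application of the stated lemma and the basic properties of $w(\cdot)$.
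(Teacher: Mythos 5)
Your proposal is correct and follows essentially the same route as the paper's own proof: the same expansion of $\langle Ax,x\rangle$ with triangle inequality, the same application of Lemma \ref{lem-1} to each off-diagonal block to produce the contractions $K_{ij}$ and the norm bounds $a_{ij}\|x_i\|\|x_j\|$, and the same reduction to the quadratic form $\langle \Tilde{A}u,u\rangle$ with $u=(\|x_1\|,\ldots,\|x_n\|)^T$ a unit vector, bounded by $w(\Tilde{A})$. No gaps; the two points you flag for care (independence of the $K_{ij}$ per block and the packaging into the non-negative matrix $\Tilde{A}$) are handled exactly as in the paper.
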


\begin{proof}
    Let  $x=  \begin{bmatrix}
	x_1\\
	x_2\\
	\vdots\\
	x_n
\end{bmatrix}\in \oplus_{i=1}^{n} \mathcal{H}$ with $\|x\|=1$,  i.e., $\|x_1\|^2+\|x_2\|^2+\ldots+\|x_n\|^2=1.$
Then, using Lemma \ref{lem-1}, we have
\begin{eqnarray}\label{p1}
	 |\langle Ax,x\rangle|
 & = & \left| \sum_{i,j=1}^{n}  \langle A_{ij}x_j,x_i\rangle \right|\notag\\
	& \leq &  \sum_{i,j=1}^{n}  \left| \langle A_{ij}x_j,x_i\rangle \right|\notag
	= \sum_{i=1}^{n}  \left| \langle A_{ii}x_i,x_i\rangle \right|+\sum_{\underset{i\neq j}{i,j=1}}^{n}  \left| \langle A_{ij}x_j,x_i\rangle \right|\notag\\
		&\leq & \sum_{i=1}^{n}  \left| \langle A_{ii}x_i,x_i\rangle \right| \notag \\
		 &+& \sum_ {{\underset{i\neq j}{i,j=1}}}^{n}   \langle   f_{ij}(|A_{ij}|) |K_{ij}| f_{ij}(|A_{ij}|)x_j,x_j\rangle^{1/2} \langle g_{ij}(|A_{ij}^*|) |K_{ij}^*| g_{ij}(|A_{ij}^*|)x_i,x_i \rangle^{1/2}.  \notag
\end{eqnarray}
 Therefore, we obtain
\begin{eqnarray*}
	|\langle Ax,x\rangle| &\leq& \sum_{i=1}^{n} w(A_{ii})\|x_i\|^2\\
 &+&\sum_ {{\underset{i\neq  j}{i,j=1}}}^{n} \left\| f_{ij}(|A_{ij}|) |K_{ij}| f_{ij}(|A_{ij}|)\right\|^{1/2} \left\| g_{ij}(|A_{ij}^*|) |K_{ij}^*| g_{ij}(|A_{ij}^*|)\right\|^{1/2} \|x_i\|\|x_j\|\\
	&=& \left \langle \Tilde{A} |x|, |x| \right \rangle,
\end{eqnarray*}
where $|x|=\begin{bmatrix}
	\| x_1\|\\
	 \|x_2\|\\
	  \vdots\\
	   \|x_n\|
\end{bmatrix}$ is an unit vector in $ \mathbb{C}^n$.
Therefore, $	|\langle Ax,x\rangle| \leq w(\Tilde{A})$ for all $x\in \oplus_{i=1}^n \mathcal{H}$ with $\|x\|=1.$ This gives $w(A)\leq w(\Tilde{A}).$ 
\end{proof}

Considering $f_{ij}(\lambda)=\lambda^t $ and   $ g_{ij}(\lambda)={\lambda}^{1-t}$ ($0\leq t\leq 1$) in Theorem \ref{th1}, we get 

\begin{cor}\label{cor1-1}
    Let $A=[A_{ij}]$ be an $n\times n$ operator matrix where $A_{ij}\in \mathcal{B}(\mathcal{H})$ for all $i,j=1,2,\ldots, n.$  Then there exist contractions $K_{ij}$,  where $A_{ij}= |A_{ij}^*|^{1-t} K_{ij} |A_{ij}|^t$ $(0\leq t\leq 1)$, such that
$w(A) \leq  w(\Tilde{A}),$
    where $\Tilde{A}=[a_{ij}]\in \mathcal{M}_n(\mathbb{C})$  with
    $$a_{ij}=\begin{cases}
        w(A_{ii}) \quad \text{if } i=j\\
       \left\| |A_{ij}|^t |K_{ij}| |A_{ij}|^t \right\|^{1/2} \left\| |A_{ij}^*|^{1-t} |K_{ij}^*| |A_{ij}^*|^{1-t}\right\|^{1/2} \quad \text{if } i\neq j.
    \end{cases}$$
    
    In particular, for $t=1/2,$ we have $w(A) \leq  w(\Tilde{A}),$
    where $\Tilde{A}=[a_{ij}]\in \mathcal{M}_n(\mathbb{C})$  with
    $$a_{ij}=\begin{cases}
        w(A_{ii}) \quad \text{if } i=j\\
       \left\| |A_{ij}|^{1/2} |K_{ij}| |A_{ij}|^{1/2}\right\|^{1/2} \left\| |A_{ij}^*|^{1/2} |K_{ij}^*| |A_{ij}^*|^{1/2}\right\|^{1/2} \quad \text{if } i\neq j.
    \end{cases}$$
    
    \end{cor}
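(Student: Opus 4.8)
The plan is to derive Corollary \ref{cor1-1} as a direct specialization of Theorem \ref{th1}, so the whole argument reduces to selecting the auxiliary functions $f_{ij}$ and $g_{ij}$ suitably and checking that they satisfy the theorem's hypotheses. Concretely, I would fix a parameter $t\in[0,1]$ and take $f_{ij}(\lambda)=\lambda^{t}$ and $g_{ij}(\lambda)=\lambda^{1-t}$ for every pair $(i,j)$, using the same pair of functions throughout the matrix.

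The first step is to verify admissibility of this choice. Since $0\leq t\leq 1$, both $\lambda\mapsto\lambda^{t}$ and $\lambda\mapsto\lambda^{1-t}$ are non-negative and continuous on $[0,\infty)$; the only point requiring a glance is $\lambda=0$, where each power equals either $0$ or the constant $1$. The factorization hypothesis holds trivially, as $f_{ij}(\lambda)g_{ij}(\lambda)=\lambda^{t}\lambda^{1-t}=\lambda$ for all $\lambda\geq 0$. Hence Theorem \ref{th1} applies and produces contractions $K_{ij}$ with $A_{ij}=g_{ij}(|A_{ij}^*|)K_{ij}f_{ij}(|A_{ij}|)=|A_{ij}^*|^{1-t}K_{ij}|A_{ij}|^{t}$, exactly the decomposition asserted in the corollary.

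The second step is simply to read off the entries of $\Tilde{A}$ furnished by the theorem under this substitution. On the diagonal the theorem gives $a_{ii}=w(A_{ii})$, unchanged. Off the diagonal, substituting $f_{ij}(|A_{ij}|)=|A_{ij}|^{t}$ and $g_{ij}(|A_{ij}^*|)=|A_{ij}^*|^{1-t}$ into the formula of Theorem \ref{th1} yields
\[
a_{ij}=\left\| |A_{ij}|^{t} |K_{ij}| |A_{ij}|^{t} \right\|^{1/2}\left\| |A_{ij}^*|^{1-t} |K_{ij}^*| |A_{ij}^*|^{1-t} \right\|^{1/2},
\]
precisely the claimed expression, and the inequality $w(A)\leq w(\Tilde{A})$ is inherited verbatim from Theorem \ref{th1}. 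The ``in particular'' case is recovered by putting $t=1/2$.

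Because the result is a specialization, there is no genuine obstacle: no new estimate beyond Theorem \ref{th1} is needed. The only points demanding a moment of care are confirming the continuity of the fractional powers at the origin and the elementary product identity $\lambda^{t}\lambda^{1-t}=\lambda$, both of which are routine.
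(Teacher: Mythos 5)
Your proposal is correct and coincides with the paper's own derivation: the paper obtains Corollary \ref{cor1-1} exactly by substituting $f_{ij}(\lambda)=\lambda^{t}$ and $g_{ij}(\lambda)=\lambda^{1-t}$ into Theorem \ref{th1}, just as you do. Your additional checks (continuity of the fractional powers at the origin and the identity $\lambda^{t}\lambda^{1-t}=\lambda$) are routine verifications the paper leaves implicit, so nothing of substance differs.
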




\begin{remark}
    Since $|K_{ij}|\leq 1$ and $|K^*_{ij}|\leq 1$, so
    $|A_{ij}|^{1/2} |K_{ij}| |A_{ij}|^{1/2} \leq | A_{ij}|$ and $|A_{ij}^*|^{1/2} |K_{ij}^*| |A_{ij}^*|^{1/2} \leq |A_{ij}^*|.$
    Thus, $\left\| |A_{ij}|^{1/2} |K_{ij}| |A_{ij}|^{1/2}\right\|^{1/2} \left\| |A_{ij}^*|^{1/2} |K_{ij}^*| |A_{ij}^*|^{1/2}\right\|^{1/2} \leq \|A_{ij}\|.$ 
    Therefore,  the numerical radius bound \eqref{p8} follows from
    Corollary \ref{cor1-1}.
\end{remark}

We now obtain another upper bound for the numerical radius of $n\times n$ operator matrices which improves \eqref{p--0}.

\begin{theorem}\label{th2}
    Let $A=[A_{ij}]$ be an $n\times n$ operator matrix where $A_{ij}\in \mathcal{B}(\mathcal{H})$ for all $i,j=1,2,\ldots, n.$ Let $f_{ij}$ and $g_{ij}$ be non-negative continuous functions on $\left[ 0,\infty  \right)$ such that $f_{ij}\left( \lambda \right)g_{ij}\left( \lambda \right)=\lambda$, $\lambda \geq 0$. Then there exist contractions $K_{ij}$,  where $A_{ij}= g_{ij}(|A_{ij}^*|) K_{ij} f_{ij}(|A_{ij}|)$, such that
$w(A) \leq  w(\Tilde{A}),$
    where $\Tilde{A}=[a_{ij}] \in \mathcal{M}_n(\mathbb{C})$  with
    $$a_{ij}=\begin{cases}
        w(A_{ii}) \quad \text{if } i=j\\
      \Big(  \left\| f_{ij}(|A_{ij}|) |K_{ij}| f_{ij}(|A_{ij}|)+   g_{ji}(|A_{ji}^*|) |K_{ji}^*| g_{ji}(|A_{ji}^*|) \right\|^{1/2}\\ 
       \quad \times \left\| g_{ij}(|A_{ij}^*|) |K_{ij}^*| g_{ij}(|A_{ij}^*|)+ f_{ji}(|A_{ji}|) |K_{ji}| f_{ji}(|A_{ji}|) \right\|^{1/2} \Big) \quad \text{if } i< j\\
       0 \quad \text{if } i> j.
    \end{cases}$$
    \end{theorem}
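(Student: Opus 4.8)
The plan is to run the same template as the proof of Theorem~\ref{th1}, but to exploit the symmetry between the $(i,j)$ and $(j,i)$ blocks so that the off-diagonal contributions can be packaged into an upper-triangular comparison matrix. Fix a unit vector $x=[x_1,\ldots,x_n]^T\in\oplus_{i=1}^n\mathcal{H}$, so that $\sum_{i=1}^n\|x_i\|^2=1$. As in the proof of Theorem~\ref{th1}, I would start from the triangle inequality $|\langle Ax,x\rangle|\leq\sum_{i=1}^n|\langle A_{ii}x_i,x_i\rangle|+\sum_{i\neq j}|\langle A_{ij}x_j,x_i\rangle|$ and bound the diagonal sum by $\sum_i w(A_{ii})\|x_i\|^2$. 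The new ingredient is to regroup the off-diagonal sum over unordered pairs, $\sum_{i\neq j}|\langle A_{ij}x_j,x_i\rangle|=\sum_{i<j}\left(|\langle A_{ij}x_j,x_i\rangle|+|\langle A_{ji}x_i,x_j\rangle|\right)$, which is what forces the entries below the diagonal to vanish.

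For each ordered pair I would apply Lemma~\ref{lem-1}, obtaining $|\langle A_{ij}x_j,x_i\rangle|\leq\alpha_{ij}^{1/2}\beta_{ij}^{1/2}$ and $|\langle A_{ji}x_i,x_j\rangle|\leq\alpha_{ji}^{1/2}\beta_{ji}^{1/2}$, where $\alpha_{ij}=\langle f_{ij}(|A_{ij}|)|K_{ij}|f_{ij}(|A_{ij}|)x_j,x_j\rangle$, $\beta_{ij}=\langle g_{ij}(|A_{ij}^*|)|K_{ij}^*|g_{ij}(|A_{ij}^*|)x_i,x_i\rangle$, and similarly with $i,j$ interchanged. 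The crucial bookkeeping is to track which summand acts on $x_i$ and which on $x_j$: the quantities $\alpha_{ij}$ (carrying $f_{ij}$) and $\beta_{ji}$ (carrying $g_{ji}$) are both evaluated at $x_j$, whereas $\beta_{ij}$ (carrying $g_{ij}$) and $\alpha_{ji}$ (carrying $f_{ji}$) are both evaluated at $x_i$.

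The heart of the argument is the elementary inequality $ab+cd\leq\sqrt{(a^2+d^2)(b^2+c^2)}$ for nonnegative reals, which is equivalent to $(ac-bd)^2\geq 0$, applied with $a=\alpha_{ij}^{1/2}$, $b=\beta_{ij}^{1/2}$, $c=\alpha_{ji}^{1/2}$, $d=\beta_{ji}^{1/2}$. By the grouping just noted, the two factors $a^2+d^2$ and $b^2+c^2$ each consolidate into a single inner product of a summed operator, namely $a^2+d^2=\langle[f_{ij}(|A_{ij}|)|K_{ij}|f_{ij}(|A_{ij}|)+g_{ji}(|A_{ji}^*|)|K_{ji}^*|g_{ji}(|A_{ji}^*|)]x_j,x_j\rangle$ and $b^2+c^2=\langle[g_{ij}(|A_{ij}^*|)|K_{ij}^*|g_{ij}(|A_{ij}^*|)+f_{ji}(|A_{ji}|)|K_{ji}|f_{ji}(|A_{ji}|)]x_i,x_i\rangle$. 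Bounding each of these by its operator norm times $\|x_j\|^2$ (respectively $\|x_i\|^2$) produces exactly $a_{ij}\|x_i\|\|x_j\|$ with $a_{ij}$ as in the statement.

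Assembling the pieces would give $|\langle Ax,x\rangle|\leq\sum_i a_{ii}\|x_i\|^2+\sum_{i<j}a_{ij}\|x_i\|\|x_j\|=\langle\Tilde{A}|x|,|x|\rangle$, where $|x|=[\|x_1\|,\ldots,\|x_n\|]^T$ is a unit vector in $\mathbb{C}^n$ and $\Tilde{A}$ is upper triangular, its below-diagonal entries being $0$ since each pair is counted once. Since $\langle\Tilde{A}|x|,|x|\rangle\leq w(\Tilde{A})$, taking the supremum over all unit $x$ yields $w(A)\leq w(\Tilde{A})$. I expect the main obstacle to be not any individual estimate but selecting the correct pairing together with the right two-variable Cauchy--Schwarz regrouping, so that each resulting factor consolidates into one inner product on a fixed component; once the inequality $ab+cd\leq\sqrt{(a^2+d^2)(b^2+c^2)}$ is matched to this grouping, the remainder parallels Theorem~\ref{th1}.
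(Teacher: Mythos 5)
Your proposal is correct and follows essentially the same route as the paper's own proof: triangle inequality, regrouping over pairs $i<j$, Lemma \ref{lem-1} applied to each ordered pair, and then the Cauchy--Schwarz step $ab+cd\leq\sqrt{(a^2+d^2)(b^2+c^2)}$ (with your pairing matching terms evaluated at $x_j$ and at $x_i$ respectively), followed by passing to $\langle\Tilde{A}|x|,|x|\rangle\leq w(\Tilde{A})$. The only difference is that you spell out explicitly the two-variable regrouping that the paper invokes simply as ``the Cauchy--Schwarz inequality.''
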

    
\begin{proof}
Let  $x=  \begin{bmatrix}
	x_1\\
	x_2\\
	\vdots\\
	x_n
\end{bmatrix}\in \oplus_{i=1}^{n} \mathcal{H}$ with $\|x\|=1$,  i.e., $\|x_1\|^2+\|x_2\|^2+\ldots+\|x_n\|^2=1.$
Then, using  Lemma \ref{lem-1}, we have
\begin{eqnarray}\label{p1}
	 |\langle Ax,x\rangle|
 & = & \left| \sum_{i,j=1}^{n}  \langle A_{ij}x_j,x_i\rangle \right|\notag\\
	& \leq &  \sum_{i,j=1}^{n}  \left| \langle A_{ij}x_j,x_i\rangle \right|\notag
	= \sum_{i=1}^{n}  \left| \langle A_{ii}x_i,x_i\rangle \right|+\sum_{\underset{i\neq j}{i,j=1}}^{n}  \left| \langle A_{ij}x_j,x_i\rangle \right|\notag\\
	&=& \sum_{i=1}^{n}  \left| \langle A_{ii}x_i,x_i\rangle \right|+\sum_ {{\underset{i< j}{i,j=1}}}^{n}  \left( \left| \langle A_{ij}x_j,x_i\rangle \right|+\left| \langle A_{ji}x_i,x_j\rangle \right| \right)\notag\\
		&\leq & \sum_{i=1}^{n}  \left| \langle A_{ii}x_i,x_i\rangle \right| \notag \\
		 &+& \sum_ {{\underset{i< j}{i,j=1}}}^{n}  \Big (  \langle   f_{ij}(|A_{ij}|) |K_{ij}| f_{ij}(|A_{ij}|)x_j,x_j\rangle^{1/2} \langle g_{ij}(|A_{ij}^*|) |K_{ij}^*| g_{ij}(|A_{ij}^*|)x_i,x_i \rangle^{1/2}  \notag\\
   &+&  \langle f_{ji}(|A_{ji}|) |K_{ji}| f_{ji}(|A_{ji}|)x_i,x_i\rangle^{1/2}   \langle  g_{ji}(|A_{ji}^*|) |K_{ji}^*| g_{ji}(|A_{ji}^*|) x_j,x_j\rangle^{1/2}   \Big). \notag
\end{eqnarray}
 Therefore, from the Cauchy-Schwarz inequality, we get  
\begin{eqnarray*}
	|\langle Ax,x\rangle| &\leq& \sum_{i=1}^{n} w(A_{ii})\|x_i\|^2\\
 &+&\sum_ {{\underset{i< j}{i,j=1}}}^{n} \Big(  \left\| f_{ij}(|A_{ij}|) |K_{ij}| f_{ij}(|A_{ij}|)+   g_{ji}(|A_{ji}^*|) |K_{ji}^*| g_{ji}(|A_{ji}^*|) \right\|^{1/2} \\ 
    &&  \times  \left\| g_{ij}(|A_{ij}^*|) |K_{ij}^*| g_{ij}(|A_{ij}^*|)+ f_{ji}(|A_{ji}|) |K_{ji}| f_{ji}(|A_{ji}|) \right\|^{1/2} \Big) \|x_i\|\|x_j\|\\
	&=& \left \langle \Tilde{A} |x|, |x| \right \rangle,
\end{eqnarray*}
where $|x|=\begin{bmatrix}
	\| x_1\|\\
	 \|x_2\|\\
	  \vdots\\
	   \|x_n\|
\end{bmatrix}$ is an unit vector in $ \mathbb{C}^n$.
Therefore, $	|\langle Ax,x\rangle| \leq w(\Tilde{A})$ for all $x\in \oplus_{i=1}^n \mathcal{H}$ with $\|x\|=1$ and so $w(A)\leq w(\Tilde{A}).$  
\end{proof}

Considering $f_{ij}(\lambda)= \lambda^t, g_{ij}(\lambda)=\lambda^{1-t}$ where $i<j$ and $f_{ji}(\lambda)= \lambda^{1-t}, g_{ji}(\lambda)=\lambda^{t}$ where $i<j$ ($0\leq t\leq 1$) in Theorem \ref{th2}, we obtain 

\begin{cor}\label{cor2}
    Let $A=[A_{ij}]$ be an $n\times n$ operator matrix where $A_{ij}\in \mathcal{B}(\mathcal{H})$ for all $i,j=1,2,\ldots, n.$  Then there exist contractions $K_{ij}$,  where $A_{ij}= |A_{ij}^*|^{1-t} K_{ij} |A_{ij}|^t$ and $A_{ji}= |A_{ji}^*|^{t} K_{ji} |A_{ji}|^{1-t}$, $i<j$ $(0\leq t\leq 1)$, such that
   $w(A) \leq  w(\Tilde{A}),$
    where $\Tilde{A}=[a_{ij}]\in \mathcal{M}_n(\mathbb{C})$   with
    $$a_{ij}=\begin{cases}
        w(A_{ii}) \quad \text{if } i=j\\
      \Big(  \left\| |A_{ij}|^t |K_{ij}| |A_{ij}|^t+   |A_{ji}^*|^t |K_{ji}^*| |A_{ji}^*|^t \right\|^{1/2}\\ 
       \quad \times \left\| |A_{ij}^*|^{1-t} |K_{ij}^*| |A_{ij}^*|^{1-t}+ |A_{ji}|^{1-t} |K_{ji}| |A_{ji}|^{1-t} \right\|^{1/2} \Big) \quad \text{if } i< j\\
       0 \quad \text{if } i> j.
    \end{cases}$$
    \end{cor}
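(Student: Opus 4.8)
The plan is to derive Corollary \ref{cor2} as a direct specialization of Theorem \ref{th2}, so the whole argument reduces to selecting the functions $f_{ij},g_{ij}$ and substituting them into the general formula. First I would fix $t\in[0,1]$ and, for each pair of indices with $i<j$, put $f_{ij}(\lambda)=\lambda^{t}$, $g_{ij}(\lambda)=\lambda^{1-t}$ together with $f_{ji}(\lambda)=\lambda^{1-t}$, $g_{ji}(\lambda)=\lambda^{t}$. I would then verify the two hypotheses of Theorem \ref{th2}: each of these functions is non-negative and continuous on $[0,\infty)$ (for $t\in[0,1]$ the map $\lambda\mapsto\lambda^{t}$ is continuous at the origin, with the usual convention $0^{0}=1$), and the required multiplicative constraint holds, since $f_{ij}(\lambda)g_{ij}(\lambda)=\lambda^{t}\lambda^{1-t}=\lambda$ and likewise $f_{ji}(\lambda)g_{ji}(\lambda)=\lambda^{1-t}\lambda^{t}=\lambda$.

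Next I would apply Theorem \ref{th2} verbatim to this choice. It produces contractions $K_{ij}$ satisfying $A_{ij}=g_{ij}(|A_{ij}^{*}|)K_{ij}f_{ij}(|A_{ij}|)$; evaluating the continuous functional calculus on the positive operators $|A_{ij}|$ and $|A_{ij}^{*}|$ turns this into the factorizations $A_{ij}=|A_{ij}^{*}|^{1-t}K_{ij}|A_{ij}|^{t}$ for $i<j$ and $A_{ji}=|A_{ji}^{*}|^{t}K_{ji}|A_{ji}|^{1-t}$ for $i<j$, exactly as asserted in the statement.

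Finally I would substitute the four evaluations $f_{ij}(|A_{ij}|)=|A_{ij}|^{t}$, $g_{ij}(|A_{ij}^{*}|)=|A_{ij}^{*}|^{1-t}$, $f_{ji}(|A_{ji}|)=|A_{ji}|^{1-t}$ and $g_{ji}(|A_{ji}^{*}|)=|A_{ji}^{*}|^{t}$ into the off-diagonal entry $a_{ij}$ of $\Tilde{A}$ furnished by Theorem \ref{th2}. The first norm factor then reads $\big\||A_{ij}|^{t}|K_{ij}||A_{ij}|^{t}+|A_{ji}^{*}|^{t}|K_{ji}^{*}||A_{ji}^{*}|^{t}\big\|^{1/2}$ and the second reads $\big\||A_{ij}^{*}|^{1-t}|K_{ij}^{*}||A_{ij}^{*}|^{1-t}+|A_{ji}|^{1-t}|K_{ji}||A_{ji}|^{1-t}\big\|^{1/2}$, which together match the claimed formula; the diagonal entries $w(A_{ii})$ and the vanishing lower-triangular entries are inherited unchanged. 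The bound $w(A)\leq w(\Tilde{A})$ is then immediate from Theorem \ref{th2}.

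There is no genuine obstacle in this corollary, since it is merely a substitution into an already proved theorem; the only point deserving a word of care is the continuity of $\lambda\mapsto\lambda^{t}$ at $\lambda=0$, which is precisely what confines the exponent to $t\in[0,1]$ and legitimizes applying the functional calculus to $|A_{ij}|$ and $|A_{ij}^{*}|$.
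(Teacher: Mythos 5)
Your proposal is correct and follows exactly the paper's own route: the paper obtains Corollary \ref{cor2} by the identical specialization $f_{ij}(\lambda)=\lambda^{t}$, $g_{ij}(\lambda)=\lambda^{1-t}$, $f_{ji}(\lambda)=\lambda^{1-t}$, $g_{ji}(\lambda)=\lambda^{t}$ (for $i<j$) in Theorem \ref{th2}. Your added remark on continuity of $\lambda\mapsto\lambda^{t}$ at $\lambda=0$ (so the hypotheses of Theorem \ref{th2} hold for all $t\in[0,1]$) is a small but legitimate point of care that the paper leaves implicit.
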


Again, considering $f_{ij}(\lambda)= \lambda^t$ and $ g_{ij}(\lambda)=\lambda^{1-t}$ for all $i\neq j$ $(0\leq t\leq 1)$ in Theorem \ref{th2}, we obtain 

\begin{cor}\label{cor3}
    Let $A=[A_{ij}]$ be an $n\times n$ operator matrix where $A_{ij}\in \mathcal{B}(\mathcal{H})$ for all $i,j=1,2,\ldots, n.$  Then there exist contractions $K_{ij}$,  where $A_{ij}= |A_{ij}^*|^{1-t} K_{ij} |A_{ij}|^t$ $(0\leq t\leq 1)$, such that
        $w(A) \leq  w(\Tilde{A})$
    where $\Tilde{A}=[a_{ij}]\in \mathcal{M}_n(\mathbb{C})$  with
    $$a_{ij}=\begin{cases}
        w(A_{ii}) \quad \text{if } i=j\\
      \Big(  \left\| |A_{ij}|^t |K_{ij}| |A_{ij}|^t+   |A_{ji}^*|^{1-t} |K_{ji}^*| |A_{ji}^*|^{1-t} \right\|^{1/2}\\ 
       \quad \times \left\| |A_{ij}^*|^{1-t} |K_{ij}^*| |A_{ij}^*|^{1-t}+ |A_{ji}|^{t} |K_{ji}| |A_{ji}|^{t} \right\|^{1/2} \Big) \quad \text{if } i< j\\
       0 \quad \text{if } i> j.
    \end{cases}$$

    In particular, for $t=1/2$, we have $w(A) \leq  w(\Tilde{A})$,
    where $\Tilde{A}=[a_{ij}]\in \mathcal{M}_n(\mathbb{C})$  with
    $$a_{ij}=\begin{cases}
        w(A_{ii}) \quad \text{if } i=j\\
      \Big(  \left\| |A_{ij}|^{1/2} |K_{ij}| |A_{ij}|^{1/2} + |A_{ji}^*|^{1/2} |K_{ji}^*| |A_{ji}^*|^{1/2}\right\|^{1/2} \\ 
       \quad \times \left\| |A_{ij}^*|^{1/2} |K_{ij}^*| |A_{ij}^*|^{1/2}+ |A_{ji}|^{1/2} |K_{ji}| |A_{ji}|^{1/2}  \right\|^{1/2} \Big) \quad \text{if } i< j\\
       0 \quad \text{if } i> j.
    \end{cases}$$
    \end{cor}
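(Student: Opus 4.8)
The plan is to obtain this corollary purely as a specialization of Theorem \ref{th2}, with no new analytic input: the entire content already resides in the general bound, and all that remains is to make a concrete, admissible choice of the factorization functions and read off the resulting entries. Unlike the choice made in Corollary \ref{cor2} (where the pair $(j,i)$ received the ``swapped'' assignment $f_{ji}(\lambda)=\lambda^{1-t}$, $g_{ji}(\lambda)=\lambda^{t}$), here I would use the \emph{uniform} assignment $f_{ij}(\lambda)=\lambda^{t}$ and $g_{ij}(\lambda)=\lambda^{1-t}$ for \emph{every} off-diagonal pair $i\neq j$, applied to both $(i,j)$ and $(j,i)$.

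First I would check admissibility. For $0\leq t\leq 1$ the functions $f_{ij}(\lambda)=\lambda^{t}$ and $g_{ij}(\lambda)=\lambda^{1-t}$ are non-negative and continuous on $[0,\infty)$, and they satisfy the structural hypothesis $f_{ij}(\lambda)g_{ij}(\lambda)=\lambda^{t}\lambda^{1-t}=\lambda$ for all $\lambda\geq 0$. Hence they are legitimate inputs to Lemma \ref{lem-1}, and the theorem guarantees contractions $K_{ij}$ with $A_{ij}=g_{ij}(|A_{ij}^*|)\,K_{ij}\,f_{ij}(|A_{ij}|)=|A_{ij}^*|^{1-t}\,K_{ij}\,|A_{ij}|^{t}$, which is precisely the factorization asserted in the statement.

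Next I would substitute into the off-diagonal entry of $\Tilde{A}$ furnished by Theorem \ref{th2}. For $i<j$ I evaluate
$$f_{ij}(|A_{ij}|)=|A_{ij}|^{t},\quad g_{ij}(|A_{ij}^*|)=|A_{ij}^*|^{1-t},\quad f_{ji}(|A_{ji}|)=|A_{ji}|^{t},\quad g_{ji}(|A_{ji}^*|)=|A_{ji}^*|^{1-t},$$
and inserting these into the two norm factors reproduces exactly
$$a_{ij}=\bigl\|\,|A_{ij}|^{t}|K_{ij}||A_{ij}|^{t}+|A_{ji}^*|^{1-t}|K_{ji}^*||A_{ji}^*|^{1-t}\bigr\|^{1/2}\bigl\|\,|A_{ij}^*|^{1-t}|K_{ij}^*||A_{ij}^*|^{1-t}+|A_{ji}|^{t}|K_{ji}||A_{ji}|^{t}\bigr\|^{1/2};$$
the diagonal entries $w(A_{ii})$ and the lower-triangular zeros carry over verbatim. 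The particular case follows by setting $t=\tfrac12$ and collecting the common exponents.

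Since every inequality is already packaged inside Theorem \ref{th2}, there is no genuine obstacle here; the only point requiring care is bookkeeping, namely tracking which exponent ($t$ versus $1-t$) attaches to $f$ and which to $g$ for each of the two reciprocal pairs $(i,j)$ and $(j,i)$, so as to recover the stated asymmetry between the two norm factors and to distinguish this corollary cleanly from Corollary \ref{cor2}.
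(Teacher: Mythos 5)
Your proposal is correct and is exactly the paper's own derivation: the paper obtains Corollary \ref{cor3} by specializing Theorem \ref{th2} with the uniform choice $f_{ij}(\lambda)=\lambda^{t}$, $g_{ij}(\lambda)=\lambda^{1-t}$ for all $i\neq j$, which is precisely your substitution, including the resulting factorization $A_{ij}=|A_{ij}^*|^{1-t}K_{ij}|A_{ij}|^{t}$ and the $t=\tfrac12$ specialization.
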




\begin{remark}\label{rem2}
    Let $A=[A_{ij}]$ be an $n\times n$ operator matrix where $A_{ij}\in \mathcal{B}(\mathcal{H})$ for all $i,j=1,2,\ldots, n.$ For any contraction $K$, we have $|K|\leq 1$ and $|K^*|\leq 1$. Therefore, Corollary \ref{cor2} and Corollary \ref{cor3} give the following bounds for the numerical radius. 

   \noindent (i) $w(A) \leq  w(\Tilde{A}),$
     where $\Tilde{A}=[a_{ij}]\in \mathcal{M}_n(\mathbb{C})$  with
    $$a_{ij}=\begin{cases}
        w(A_{ii}) \quad \text{if } i=j\\
    \left\| |A_{ij}|^{2t} +   |A_{ji}^*|^{2t}  \right\|^{1/2} 
        \left\| |A_{ij}^*|^{2(1-t)}+ |A_{ji}|^{2(1-t)}  \right\|^{1/2}  \quad \text{if } i< j\\
       0 \quad \text{if } i> j.
    \end{cases}$$

 \noindent   (ii) $w(A) \leq  w(\Tilde{A}),$
   where $\Tilde{A}=[a_{ij}]\in \mathcal{M}_n(\mathbb{C})$  with
    $$a_{ij}=\begin{cases}
        w(A_{ii}) \quad \text{if } i=j\\
       \left\| |A_{ij}|^{2t} +   |A_{ji}^*|^{2(1-t)}  \right\|^{1/2} 
        \left\| |A_{ij}^*|^{2(1-t)} + |A_{ji}|^{2t}  \right\|^{1/2}  \quad \text{if } i< j\\
       0 \quad \text{if } i> j.
    \end{cases}$$

 \noindent   (iii) 
        $w(A) \leq  w(\Tilde{A}),$
    where $\Tilde{A}=[a_{ij}]\in \mathcal{M}_n(\mathbb{C})$  with
    $$a_{ij}=\begin{cases}
        w(A_{ii}) \quad \text{if } i=j\\
     \left\| |A_{ij}| + |A_{ji}^*| \right\|^{1/2} 
        \left\| |A_{ij}^*|+ |A_{ji}|  \right\|^{1/2}  \quad \text{if } i< j\\
       0 \quad \text{if } i> j.
    \end{cases}$$
The inequalities in (ii) and (iii) are also proved in \cite{Bhunia-AdM} and these bounds refine the bound \eqref{p8}, see \cite[Remark 2.5]{Bhunia-AdM}.
\end{remark}

\begin{remark}  \label{rem12}
(i) Let $A=[A_{ij}]$ be an $n\times n$ operator matrix where $A_{ij}\in \mathcal{B}(\mathcal{H})$ for all $i,j=1,2,\ldots, n.$
Observe that if we choose $f_{ij}(\lambda)= \lambda^{t_{ij}}, g_{ij}(\lambda)=\lambda^{1-t_{ij}}$ where $i<j$ and $f_{ji}(\lambda)= \lambda^{1-t_{ji}}, g_{ji}(\lambda)=\lambda^{t_{ji}}$ where $i<j$ ($0\leq t_{ij}\leq 1$) in Theorem \ref{th2}, then  we deduce that
\noindent $w(A) \leq  w(\Tilde{A}),$
     where $\Tilde{A}=[a_{ij}]\in \mathcal{M}_n(\mathbb{C})$  with
    $$a_{ij}=\begin{cases}
        w(A_{ii}) \quad \text{if } i=j\\
    \underset{0\leq t \leq 1}{\min} \left\| |A_{ij}|^{2t} +   |A_{ji}^*|^{2t}  \right\|^{1/2} 
        \left\| |A_{ij}^*|^{2(1-t)}+ |A_{ji}|^{2(1-t)}  \right\|^{1/2}  \quad \text{if } i< j\\
       0 \quad \text{if } i> j.
    \end{cases}$$

 \noindent   (ii) Similarly, from Theorem \ref{th2} we also deduce that  $w(A) \leq  w(\Tilde{A}),$
   where $\Tilde{A}=[a_{ij}]\in \mathcal{M}_n(\mathbb{C})$  with
    $$a_{ij}=\begin{cases}
        w(A_{ii}) \quad \text{if } i=j\\
      \underset{0\leq t \leq 1}{\min}  \left\| |A_{ij}|^{2t} +   |A_{ji}^*|^{2(1-t)}  \right\|^{1/2} 
        \left\| |A_{ij}^*|^{2(1-t)} + |A_{ji}|^{2t}  \right\|^{1/2}  \quad \text{if } i< j\\
       0 \quad \text{if } i> j.
    \end{cases}$$
Clearly, bounds in (i) and (ii) of Remark \ref{rem12} are stronger than the same in (iii) of Remark \ref{rem2}, which is given in \cite[Corollary 2.4]{Bhunia-AdM}.
\end{remark}

Now, from Remark \ref{rem12} (i) (for $t=1$,  $t=0$) and using the relation \eqref{pp0}, we get 
\begin{prop}\label{prop4}
    Let $A=[A_{ij}]$ be an $n\times n$ operator matrix where $A_{ij}\in \mathcal{B}(\mathcal{H})$ for all $i,j=1,2,\ldots, n.$ Then 
    $w(A) \leq  w(\Tilde{A})$,
     where $\Tilde{A}=[{a}_{ij}] \in \mathcal{M}_n(\mathbb{C})$ with
    $${a}_{ij}=\begin{cases}
        w(A_{ii}) \quad \text{if } i=j\\
    \min \left\{ \sqrt{\left\| \frac{|A_{ij}|^{2} +   |A_{ji}^*|^{2}}{2}  \right\| }, \sqrt{\left\| \frac{|A_{ij}^*|^{2} +   |A_{ji}|^{2}}{2}  \right\| } \right\} 
          \quad \text{if } i\neq  j.
    \end{cases}$$

\end{prop}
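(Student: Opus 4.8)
The plan is to specialize the bound from Remark~\ref{rem12}(i) at the two extreme values $t=1$ and $t=0$ and then reconcile the resulting expressions with the claimed diagonal-like form via the identity \eqref{pp0}. First I would write down what Remark~\ref{rem12}(i) gives at $t=1$: the off-diagonal entry (for $i<j$) becomes $\left\| |A_{ij}|^{2} + |A_{ji}^*|^{2} \right\|^{1/2} \left\| |A_{ij}^*|^{0} + |A_{ji}|^{0} \right\|^{1/2}$. Here the second factor involves $|A_{ij}^*|^{0}$, which on the support of the operator is the identity; the careful point is that $\lambda^{0}$ is interpreted so that $\left\| |A_{ij}^*|^{0} + |A_{ji}|^{0} \right\| = 2$, giving the factor $\sqrt{2}$. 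Hence at $t=1$ the entry equals $\sqrt{2}\,\left\| |A_{ij}|^{2} + |A_{ji}^*|^{2}\right\|^{1/2} = \sqrt{2}\cdot\sqrt{2}\,\sqrt{\bigl\|(|A_{ij}|^{2}+|A_{ji}^*|^{2})/2\bigr\|} = 2\sqrt{\bigl\|(|A_{ij}|^{2}+|A_{ji}^*|^{2})/2\bigr\|}$, and symmetrically $t=0$ produces the expression with $|A_{ij}^*|^{2}+|A_{ji}|^{2}$.

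The remaining work is to remove the spurious factor of $2$ and to justify taking the minimum of the two extreme-value expressions. For the factor of $2$, I would invoke \eqref{pp0}: since the matrix $\Tilde A$ is (upper-triangular plus diagonal, hence) a nonnegative-entry matrix once we replace the off-diagonal block structure by scalars, and since only the symmetrized combination $\Tilde A + \Tilde A^*$ enters through $w(\Tilde A)=\tfrac12 r(\Tilde A+\Tilde A^*)$, the off-diagonal scalar $a_{ij}$ appears paired with $a_{ji}=0$; the relation \eqref{pp0} effectively halves the contribution of each off-diagonal entry, which is precisely what absorbs the factor of $2$ and leaves $\sqrt{\bigl\|(|A_{ij}|^{2}+|A_{ji}^*|^{2})/2\bigr\|}$. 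For the minimum, I would note that Remark~\ref{rem12}(i) holds for every admissible $t$, so in particular for both $t=1$ and $t=0$; since each choice yields a valid upper bound $w(A)\le w(\Tilde A^{(t)})$ and the spectral-radius monotonicity of nonnegative matrices (cited after \eqref{pp0}) lets us compare entrywise, taking the entrywise minimum of the two bounds still yields a valid upper bound $w(\Tilde A)$ with $a_{ij}$ equal to that minimum.

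The main obstacle I anticipate is the rigorous handling of the $\lambda^{0}$ factor and the bookkeeping that turns the $\sqrt{2}$-inflated extreme-value entry into the clean $\sqrt{\|\cdot/2\|}$ form; one must be precise about whether $|A_{ij}^*|^{0}$ means the identity on all of $\mathcal H$ or only on the range, and about how \eqref{pp0} combined with the monotonicity interacts with the entrywise minimum. Once the entrywise comparison is set up correctly, the argument is a direct substitution followed by invoking the two already-established tools (the relation \eqref{pp0} and the spectral-radius monotonicity for nonnegative matrices), so the proof itself should be short.
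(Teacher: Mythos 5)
Your overall route is the same as the paper's: specialize the per-entry bound at the endpoints $t=1$ and $t=0$, and use \eqref{pp0} to pass from an upper-triangular bound matrix to the symmetric one. Two of your three steps are sound. The $\lambda^{0}$ issue is resolved exactly as you suggest: at $t=1$ one takes $f_{ij}(\lambda)=\lambda$ and $g_{ij}(\lambda)\equiv 1$ (the constant function, forced by continuity and $f_{ij}g_{ij}=\lambda$), so $g_{ij}(|A_{ij}^*|)=I$ on all of $\mathcal H$ and the second factor is $\sqrt2$, giving the entry $2\sqrt{\bigl\|(|A_{ij}|^{2}+|A_{ji}^*|^{2})/2\bigr\|}$. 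The factor-of-$2$ step is also correct in substance: the upper-triangular matrix with strictly upper entries $2a_{ij}$ and the symmetric matrix with entries $a_{ij}$ (note $a_{ij}=a_{ji}$ in the Proposition) have the same symmetrization, hence the same numerical radius by \eqref{pp0}.

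The genuine gap is your justification of the minimum. You argue: $t=1$ and $t=0$ each give a valid bound, $w(A)\le w(\tilde A^{(1)})$ and $w(A)\le w(\tilde A^{(0)})$, and monotonicity then allows taking the entrywise minimum. That inference is invalid: monotonicity yields $w\bigl(\min(\tilde A^{(1)},\tilde A^{(0)})\bigr)\le\min\bigl\{w(\tilde A^{(1)}),w(\tilde A^{(0)})\bigr\}$, which is the wrong direction — knowing $w(A)$ lies below both $w(\tilde A^{(1)})$ and $w(\tilde A^{(0)})$ says nothing about its relation to the possibly much smaller numerical radius of the entrywise minimum. As a counterexample to the inference scheme itself, take $M_1=\mathrm{diag}(1,1)$ and $M_2=\begin{bmatrix}0&2\\2&0\end{bmatrix}$: an operator with $w(A)=1$ satisfies $w(A)\le w(M_1)$ and $w(A)\le w(M_2)$, yet the entrywise minimum is the zero matrix. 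The issue is not academic here, because for different pairs $(i,j)$ the minimum in the Proposition may be attained at different endpoints, so the entrywise-min matrix equals $\tilde A^{(t)}$ for no single $t$. The missing ingredient — and what the paper actually relies on — is that Theorem \ref{th2} allows the functions $f_{ij},g_{ij}$ (hence the exponent $t_{ij}$) to be chosen independently for each pair $(i,j)$; this is precisely why Remark \ref{rem12}(i) is stated with $\min_{0\le t\le1}$ inside each entry of one single bound matrix, rather than as a one-parameter family of bounds. Starting from that single matrix, each entry is at most the smaller of its $t=0$ and $t=1$ values, namely $2a_{ij}$, and then one application of entrywise monotonicity together with \eqref{pp0} finishes the proof. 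So the statement is true and within reach of your tools, but the "entrywise minimum of two valid bound matrices is again a valid bound matrix" principle you invoke is not a legitimate step and must be replaced by the per-entry choice of $t_{ij}$ built into Remark \ref{rem12}(i).
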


\begin{remark}
    (i) Clearly, when $\|A_{ij}\|=\|A_{ji}\|$ for all $i,j=1,2,\ldots,n$, then both the bounds in Proposition \ref{prop4} refine the same in \eqref{p8}. To show proper refinement we consider an example. Take $n=2$ and $A_{11}=A_{22}=0$, $A_{12}=A_{21}=\begin{bmatrix}
        0&1\\
        0&0
    \end{bmatrix}$. Then the bounds in Proposition \ref{prop4} gives $w(A) \leq \frac{1}{\sqrt{2}}$, whereas the bound \eqref{p8} gives $w(A)\leq 1.$\\
    (ii) Consider $n=2$, $A_{11}=C,$ $ A_{22}=D$, $A_{12}=A$, $A_{21}=B$ in Proposition \ref{prop4}, we get
    \begin{eqnarray}\label{p---22}
        w\left(\begin{bmatrix}
			C&A\\
			B&D
		\end{bmatrix}\right) &\leq & \frac12 \left( w(C)+w(D) \right) \notag \\
  & + & \sqrt{ \frac14 \big( w(C)-w(D)\big)^2 + \frac12 \min \Big\{   \big\| |A|^{2} +   |B^*|^{2}  \big\| ,    \big\| {|A^*|^{2} +   |B|^{2}}  \big\|   \Big\} } \notag .
    \end{eqnarray}
    \noindent  (iii) Consider $C=D=0$ in (ii), we get
    \begin{eqnarray}\label{p---2}
        w\left(\begin{bmatrix}
			0&A\\
			B&0
		\end{bmatrix}\right) &\leq & \min \left\{ \sqrt{ \frac12 \left\| {|A|^{2} +   |B^*|^{2}}  \right\| },   \sqrt{ \frac12 \left\| {|A^*|^{2} +   |B|^{2}}  \right\| }  \right\}.
    \end{eqnarray}
  This bound refines the upper bound in \cite[Theorem 2.2]{Bhunia-AOFA}, namely, $w\left(\begin{bmatrix}
			0&A\\
			B&0
		\end{bmatrix}\right) \leq  \max \left\{ \sqrt{ \frac12 \left\| {|A|^{2} +   |B^*|^{2}}  \right\| },   \sqrt{ \frac12 \left\| {|A^*|^{2} +   |B|^{2}}  \right\| }  \right\}.$\\
  
\end{remark}

By putting $n=2$, $A_{11}=A_{22}=0$, $A_{12}=A$ and $A_{21}=B$ in Remark \ref{rem2} (i), we get

\begin{prop}\label{prop5}
Let $A,B\in \mathcal{B}(\mathcal{H}).$ Then
    \begin{eqnarray*}
        w\left(\begin{bmatrix}
			0&A\\
			B&0
		\end{bmatrix}\right) &\leq & \frac12 \left\| |A|^{2t} +   |B^*|^{2t}  \right\|^{1/2} 
        \left\| |A^*|^{2(1-t)}+ |B|^{2(1-t)}  \right\|^{1/2}  \quad \text{for all } t\in [0,1].
    \end{eqnarray*}
    In particular, for $t=1/2,$
    \begin{eqnarray}\label{p-adm}
        w\left(\begin{bmatrix}
			0&A\\
			B&0
		\end{bmatrix}\right) &\leq & \frac12 \left\| |A|^{} +   |B^*|^{}  \right\|^{1/2} 
        \left\| |A^*|^{}+ |B|^{}  \right\|^{1/2}.
    \end{eqnarray}
\end{prop}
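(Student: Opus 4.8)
The plan is to specialize Remark \ref{rem2}(i) to the $2\times 2$ case. Setting $n=2$, $A_{11}=A_{22}=0$, $A_{12}=A$, and $A_{21}=B$, I would first read off the entries of the associated scalar matrix $\Tilde{A}=[a_{ij}]\in\mathcal{M}_2(\mathbb{C})$. The diagonal entries vanish, since $a_{11}=w(A_{11})=w(0)=0$ and $a_{22}=w(A_{22})=w(0)=0$; the lower entry satisfies $a_{21}=0$ by the prescription for $i>j$; and the single upper entry is
$$a_{12}=\left\||A_{12}|^{2t}+|A_{21}^*|^{2t}\right\|^{1/2}\left\||A_{12}^*|^{2(1-t)}+|A_{21}|^{2(1-t)}\right\|^{1/2},$$
which upon substituting $A_{12}=A$ and $A_{21}=B$ becomes $\left\||A|^{2t}+|B^*|^{2t}\right\|^{1/2}\left\||A^*|^{2(1-t)}+|B|^{2(1-t)}\right\|^{1/2}$. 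Hence $\Tilde{A}=\begin{bmatrix}0&a_{12}\\0&0\end{bmatrix}$ with $a_{12}\ge 0$.

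The second step is to evaluate $w(\Tilde{A})$ for this explicit nilpotent matrix. Since every entry of $\Tilde{A}$ is non-negative, I would invoke the relation \eqref{pp0}, which gives $w(\Tilde{A})=\tfrac12 r(\Tilde{A}+\Tilde{A}^*)$. Here $\Tilde{A}+\Tilde{A}^*=\begin{bmatrix}0&a_{12}\\a_{12}&0\end{bmatrix}$ has eigenvalues $\pm a_{12}$, so $r(\Tilde{A}+\Tilde{A}^*)=a_{12}$ and therefore $w(\Tilde{A})=\tfrac12 a_{12}$. Equivalently, one may observe that $\Tilde{A}^2=0$, whence $w(\Tilde{A})=\tfrac12\|\Tilde{A}\|=\tfrac12 a_{12}$. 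Combining this with the inequality $w(A)\le w(\Tilde{A})$ supplied by Remark \ref{rem2}(i) yields the asserted bound for every $t\in[0,1]$.

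For the \emph{in particular} clause I would simply substitute $t=1/2$, which replaces the exponents $2t$ and $2(1-t)$ by $1$ and produces \eqref{p-adm}. There is essentially no analytic obstacle: the argument is a direct specialization of Remark \ref{rem2}(i). The only point demanding care is the correct bookkeeping of which of $|A|,|A^*|,|B|,|B^*|$ lands in each factor after the substitution $A_{12}=A$, $A_{21}=B$; and the mild ``hard part'' is recognizing that the $2\times 2$ scalar matrix produced is square-zero, so that its numerical radius collapses to exactly half its single nonzero entry.
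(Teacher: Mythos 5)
Your proposal is correct and takes essentially the same route as the paper, which likewise obtains Proposition \ref{prop5} by putting $n=2$, $A_{11}=A_{22}=0$, $A_{12}=A$, $A_{21}=B$ in Remark \ref{rem2}(i). Your explicit evaluation $w(\Tilde{A})=\tfrac12 a_{12}$ for the resulting square-zero scalar matrix merely fills in a detail the paper leaves implicit, and it is accurate.
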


\begin{remark}
   Considering $A=B$ in Proposition \ref{prop5}, we obtain an upper bound for the numerical radius of a bounded linear operator, namely,
   \begin{eqnarray}\label{p---112}
        w\left(A \right) &\leq & \frac12 \left\| |A|^{2t} +   |A^*|^{2t}  \right\|^{1/2} 
        \left\| |A^*|^{2(1-t)}+ |A|^{2(1-t)}  \right\|^{1/2}  \quad \text{for all } t\in [0,1].
    \end{eqnarray}
    Clearly, 
    $$ \frac12 \left\| |A|^{2t} +   |A^*|^{2t}  \right\|^{1/2} 
        \left\| |A^*|^{2(1-t)}+ |A|^{2(1-t)}  \right\|^{1/2} \leq \sqrt{\frac12 \left\| |A|^{2} +   |A^*|^{2}  \right\|} \quad \text{for all } t\in [0,1].$$
     Therefore, the bound \eqref{p---112} refines the bound  $w(A) \leq \sqrt{\frac12 \left\| |A|^{2} +   |A^*|^{2}  \right\|},$ proved by Kittaneh \cite{Kittaneh2005}.  In particular, for $t=1/2$ in \eqref{p---112}, we deduce the well known bound $w(A) \leq \frac12 \| |A|+|A^*| \|,$ proved by Kittaneh \cite{Kittaneh2003}.
\end{remark}

\begin{remark}
  (i)  Let $A,B\in \mathcal{B}(\mathcal{H}).$ Using the argument $\frac12 \|A+A^*\| \leq w(A)$ and the inequality \eqref{p-adm}, we obtain that 
    \begin{eqnarray}\label{p--adm}
        \frac12 \| A+B\| \leq w\left(\begin{bmatrix}
			0&A\\
			B^*&0
		\end{bmatrix}\right) &\leq & \frac12 \left\| |A|^{} +   |B|^{}  \right\|^{1/2} 
        \left\| |A^*|^{}+ |B^*|^{}  \right\|^{1/2}.
    \end{eqnarray}
\noindent (ii) Clearly, if $B=0,$ then $w\left(\begin{bmatrix}
			0&A\\
			0&0
		\end{bmatrix}\right)=\frac{1}{2}\|A\|.$ 
\noindent (iii) Clearly, if $B=A,$ then $w\left(\begin{bmatrix}
			0&A\\
			A^*&0
		\end{bmatrix}\right)=\|A\|.$ 
   \noindent (iv) If $A,B$ are positive, then
       $w\left(\begin{bmatrix}
			0&A\\
			B&0
		\end{bmatrix}\right)= \frac12 \| A+B\| .$
    (v) If $A,B$ are normal, then
    $ \| A+B\| \leq 2  w\left(\begin{bmatrix}
			0&A\\
			B^*&0
		\end{bmatrix}\right) \leq   \left\| |A|^{} +   |B|^{}  \right\|.$

\end{remark}

Applying the numerical radius bound in Remark \ref{rem12} (i), we now obtain a numerical radius bound for the sum of the product of two pairs of operators. 

\begin{theorem}\label{th3}
	Let $A,B,C,D\in \mathcal{B}(\mathcal{H}).$ Then
	\begin{eqnarray*}
		w(AB\pm CD) 
  &\leq & \frac14 \Big(\min_{t\in [0,1]} \left\| |A|^{2t}+ |B^*|^{2t} \right\| \left\|  |A^*|^{2(1-t)}+|B|^{2(1-t)} \right\| \\ 
		&& + \min_{t\in [0,1]}\left\| |C|^{2t}+ |D^*|^{2t} \right\| \left\| |C^*|^{2(1-t)}+|D|^{2(1-t)} \right\|  \Big).
	\end{eqnarray*} 
	 
\end{theorem}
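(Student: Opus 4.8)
The plan is to realize $AB\pm CD$ as a single product $XY$ of two rectangular operator matrices, pass to the associated off-diagonal $2\times 2$ block operator, and combine the power inequality $w(S^2)\le w(S)^2$ with the matrix bound in Remark \ref{rem12}(i).

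First I would write $AB\pm CD = XY$, where $X=\begin{bmatrix} A & C\end{bmatrix}\colon \mathcal H\oplus\mathcal H\to\mathcal H$ and $Y=\begin{bmatrix} B\\ \pm D\end{bmatrix}\colon \mathcal H\to\mathcal H\oplus\mathcal H$. Setting $T=\begin{bmatrix} 0 & X\\ Y & 0\end{bmatrix}$ on $\mathcal H\oplus(\mathcal H\oplus\mathcal H)$, one has $T^2=\begin{bmatrix} XY & 0\\ 0 & YX\end{bmatrix}$, so $\max\{w(XY),w(YX)\}=w(T^2)\le w(T)^2$ by the power inequality recalled in the introduction, together with the fact that the numerical radius of a block-diagonal operator is the maximum of the numerical radii of its diagonal blocks. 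In particular $w(AB\pm CD)=w(XY)\le w(T)^2$.

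Next I would view $T$ as the genuine $3\times 3$ operator matrix $T=\begin{bmatrix} 0 & A & C\\ B & 0 & 0\\ \pm D & 0 & 0\end{bmatrix}$ with all entries in $\mathcal B(\mathcal H)$, and apply Remark \ref{rem12}(i). Its diagonal blocks vanish, and the $(2,3)$ corner is built from $T_{23}=T_{32}=0$, so the resulting scalar matrix is $\widetilde T=\begin{bmatrix} 0 & a_{12} & a_{13}\\ 0 & 0 & 0\\ 0 & 0 & 0\end{bmatrix}$, where $a_{12}=\min_{t}\||A|^{2t}+|B^*|^{2t}\|^{1/2}\||A^*|^{2(1-t)}+|B|^{2(1-t)}\|^{1/2}$ and $a_{13}=\min_{t}\||C|^{2t}+|D^*|^{2t}\|^{1/2}\||C^*|^{2(1-t)}+|D|^{2(1-t)}\|^{1/2}$; here I use $|\pm D|=|D|$ and $|(\pm D)^*|=|D^*|$, which is precisely why both signs produce the same bound.

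Finally, since $\widetilde T^{\,2}=0$, the identity $w=\tfrac12\|\cdot\|$ for square-zero operators (recalled in the introduction) gives $w(\widetilde T)=\tfrac12\|\widetilde T\|=\tfrac12\sqrt{a_{12}^2+a_{13}^2}$. Combining $w(AB\pm CD)\le w(T)^2\le w(\widetilde T)^2=\tfrac14(a_{12}^2+a_{13}^2)$ and squaring $a_{12},a_{13}$ yields the claimed inequality. I expect the main obstacle to be the bookkeeping of the factorization: one must choose $X,Y$ so that $XY$ is \emph{exactly} $AB\pm CD$ and simultaneously the cross term $T_{23}$ vanishes, since any nonzero $(2,3)$ entry would contaminate $\widetilde T$ with an unwanted summand and destroy the clean $\tfrac14(\,\cdot+\cdot\,)$ form. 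The remaining steps are the standard power-inequality reduction and the elementary evaluation of $w(\widetilde T)$.
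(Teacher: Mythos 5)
Your proposal is correct and follows essentially the same route as the paper: both form the $3\times 3$ operator matrix $\begin{bmatrix} 0 & A & C\\ B & 0 & 0\\ \pm D & 0 & 0\end{bmatrix}$, bound its numerical radius via Remark \ref{rem12}(i), and combine the power inequality $w(T^2)\le w(T)^2$ with an exact evaluation of $w(\widetilde T)$ for the nilpotent scalar matrix. The only cosmetic differences are your $2\times 2$ rectangular-block packaging of $T$ and your use of the square-zero identity $w(\widetilde T)=\tfrac12\|\widetilde T\|$ where the paper computes $w(\widetilde T)$ as the spectral radius of $\tfrac12(\widetilde T+\widetilde T^{*})$; both give $\tfrac12\sqrt{a_{12}^2+a_{13}^2}$.
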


\begin{proof}
	Considering $n=3$, $A_{11}=A_{22}=A_{33}=A_{23}=A_{32}=0$, $A_{12}=A$, $A_{13}=C$, $A_{21}=B$ and $A_{31}=D$ in Remark \ref{rem12} (i), we obtain 
	$w\left(\begin{bmatrix}
			0&A&C\\
			B&0&0\\
			D&0&0
		\end{bmatrix}\right) \leq w\left(\begin{bmatrix}
		0&a&c\\
		0&0&0\\
		0&0&0
	\end{bmatrix}\right),
	$
	where $a= \underset{{t\in [0,1]}}{\min} \left\| |A|^{2t}+ |B^*|^{2t} \right\|^{1/2} \left\||A^*|^{2(1-t)}+ |B|^{2(1-t)} \right\|^{1/2}$  and 
 \begin{eqnarray*}
     c&=& \min_{t\in [0,1]} \left\| |C|^{2t}+ |D^*|^{2t} \right\|^{1/2} \left\| |C^*|^{2(1-t)}+|D|^{2(1-t)} \right\|^{1/2} .
	\end{eqnarray*}
Therefore, now the desired bound follows from the proof of \cite[Theorem 2.8]{Bhunia-AdM}.
 \end{proof}

Similarly, from Remark \ref{rem12} (ii), we also obtain 
\begin{theorem}\label{th4}
	Let $A,B,C,D\in \mathcal{B}(\mathcal{H}).$ Then
	\begin{eqnarray*}
		w(AB\pm CD) 
  &\leq & \frac14 \Big(\min_{t\in [0,1]} \left\| |A|^{2t}+ |B^*|^{2(1-t)} \right\| \left\|  |A^*|^{2(1-t)}+|B|^{2t} \right\| \\ 
		&& + \min_{t\in [0,1]}\left\| |C|^{2t}+ |D^*|^{2(1-t)} \right\| \left\| |C^*|^{2(1-t)}+|D|^{2t} \right\|  \Big).
	\end{eqnarray*} 
\end{theorem}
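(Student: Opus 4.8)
The plan is to run the proof of Theorem~\ref{th3} almost verbatim, feeding it the bound of Remark~\ref{rem12}~(ii) in place of Remark~\ref{rem12}~(i). First I would set $n=3$ and apply Remark~\ref{rem12}~(ii) to the operator matrix $M=\begin{bmatrix} 0&A&C\\ B&0&0\\ D&0&0 \end{bmatrix}$, i.e.\ take $A_{11}=A_{22}=A_{33}=A_{23}=A_{32}=0$, $A_{12}=A$, $A_{13}=C$, $A_{21}=B$, $A_{31}=D$. Since the $(2,3)$ block vanishes, this produces $w(M)\leq w(\Tilde A)$ with $\Tilde A=\begin{bmatrix} 0&a&c\\ 0&0&0\\ 0&0&0 \end{bmatrix}$, where $a=\min_{t\in[0,1]}\||A|^{2t}+|B^*|^{2(1-t)}\|^{1/2}\||A^*|^{2(1-t)}+|B|^{2t}\|^{1/2}$ and $c=\min_{t\in[0,1]}\||C|^{2t}+|D^*|^{2(1-t)}\|^{1/2}\||C^*|^{2(1-t)}+|D|^{2t}\|^{1/2}$ are exactly the entries prescribed by Remark~\ref{rem12}~(ii).

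Next I would evaluate $w(\Tilde A)$. Because $\Tilde A$ has nonnegative real entries, \eqref{pp0} gives $w(\Tilde A)=\frac12 r(\Tilde A+\Tilde A^*)$, and the symmetric matrix $\Tilde A+\Tilde A^*$ has characteristic polynomial $\lambda(\lambda^2-a^2-c^2)$, so that $w(\Tilde A)=\frac12\sqrt{a^2+c^2}$. I would then compute $M^2$ and note that it is block diagonal, $M^2=(AB+CD)\oplus\begin{bmatrix} BA&BC\\ DA&DC\end{bmatrix}$; hence $w(AB+CD)\leq w(M^2)$, since the numerical radius of a direct sum is the maximum of the numerical radii of the summands. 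Combining this with the power inequality $w(M^2)\leq w^2(M)$ and the estimate $w(M)\leq w(\Tilde A)$ yields $w(AB+CD)\leq w^2(\Tilde A)=\frac14(a^2+c^2)$. Observing that $a^2=\min_{t}\||A|^{2t}+|B^*|^{2(1-t)}\|\,\||A^*|^{2(1-t)}+|B|^{2t}\|$ and similarly for $c^2$ gives precisely the asserted inequality for $w(AB+CD)$; this is the content of the proof of \cite[Theorem 2.8]{Bhunia-AdM} adapted to the present entries.

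Finally, for the minus sign I would replace $C$ by $-C$, equivalently flip the sign of the $(1,3)$ block of $M$. Since $|-C|=|C|$ and $|(-C)^*|=|C^*|$, the matrix $\Tilde A$ and hence $w(\Tilde A)$ are unchanged, whereas the top-left block of $M^2$ becomes $AB-CD$; the identical chain of inequalities then delivers the bound for $w(AB-CD)$. I do not expect a genuine obstacle, as the argument is structurally the same as that of Theorem~\ref{th3}; the only points requiring attention are the correct placement of the asymmetric exponents $2t$ versus $2(1-t)$ inherited from Remark~\ref{rem12}~(ii), and the elementary eigenvalue computation for $\Tilde A+\Tilde A^*$.
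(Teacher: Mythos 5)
Your proposal is correct and follows essentially the same route as the paper: the paper obtains Theorem~\ref{th4} by running the proof of Theorem~\ref{th3} with Remark~\ref{rem12}~(ii) in place of Remark~\ref{rem12}~(i), which is exactly the substitution you make, and your reconstruction of the squaring argument ($M^2$ block diagonal, $w(M^2)\leq w^2(M)\leq w^2(\Tilde A)=\frac14(a^2+c^2)$, sign flip via $C\mapsto -C$) matches the computation the paper delegates to the proof of \cite[Theorem 2.8]{Bhunia-AdM}.
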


Clearly, Theorem \ref{th4} is stronger than \cite[Theorem 2.8]{Bhunia-AdM}.
Considering $C=B$ and $D=A$ in Theorem \ref{th3}, we obtain the following numerical radius bound for the commutator of operators.

\begin{cor}\label{cor5}
	Let $A,B\in \mathcal{B}(\mathcal{H}).$ Then
	\begin{eqnarray*}
		w(AB\pm BA) 
  &\leq & \frac14 \Big( \min_{t\in [0,1]} \left\| |A|^{2t}+ |B^*|^{2t} \right\| \left\|  |A^*|^{2(1-t)}+|B|^{2(1-t)} \right\| \\ 
		&& +\min_{t\in [0,1]} \left\| |B|^{2t}+ |A^*|^{2t} \right\| \left\| |B^*|^{2(1-t)}+|A|^{2(1-t)} \right\|  \Big).
	\end{eqnarray*} 
	\end{cor}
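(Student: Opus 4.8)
The plan is to obtain this bound as a direct specialization of Theorem \ref{th3}. First I would set $C=B$ and $D=A$ in the statement of Theorem \ref{th3}, a legitimate choice since $A,B,C,D$ range freely over $\mathcal{B}(\mathcal{H})$ with no independence constraints imposed in the hypothesis. Under this substitution the left-hand side $w(AB\pm CD)$ becomes $w(AB\pm BA)$, which is precisely the commutator (for the minus sign) or the anticommutator (for the plus sign) that we wish to bound.

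Next I would carry the substitution through the right-hand side. The first minimum term $\min_{t\in[0,1]}\||A|^{2t}+|B^*|^{2t}\|\,\||A^*|^{2(1-t)}+|B|^{2(1-t)}\|$ does not involve $C$ or $D$ and is therefore unchanged. In the second minimum term I would replace $|C|$ by $|B|$, $|D^*|$ by $|A^*|$, $|C^*|$ by $|B^*|$, and $|D|$ by $|A|$; this converts $\min_{t}\||C|^{2t}+|D^*|^{2t}\|\,\||C^*|^{2(1-t)}+|D|^{2(1-t)}\|$ into $\min_{t}\||B|^{2t}+|A^*|^{2t}\|\,\||B^*|^{2(1-t)}+|A|^{2(1-t)}\|$, which matches the second summand of the asserted bound exactly. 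Assembling the two terms with the prefactor $\tfrac14$ yields the claimed inequality.

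Since the entire argument is a term-by-term substitution into an already established inequality, I do not anticipate any genuine obstacle; the only point requiring care is bookkeeping --- ensuring that each of $|C|,|C^*|,|D|,|D^*|$ is sent to its correct counterpart among $|A|,|A^*|,|B|,|B^*|$ under $C\mapsto B$, $D\mapsto A$, so that the exponents $2t$ and $2(1-t)$ remain attached to the intended factors. Because Theorem \ref{th3} already incorporates the independent minimizations over $t$ in each of its two summands, no further optimization step is needed, and the corollary follows at once.
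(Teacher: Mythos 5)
Your proposal is correct and is exactly the paper's own argument: the paper derives Corollary \ref{cor5} by the same substitution $C=B$, $D=A$ in Theorem \ref{th3}, and your term-by-term bookkeeping ($|C|\mapsto|B|$, $|D^*|\mapsto|A^*|$, $|C^*|\mapsto|B^*|$, $|D|\mapsto|A|$) reproduces the stated bound precisely. Nothing further is needed.
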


\noindent Put $t=\frac12$, we get 
 $  w(AB \pm BA) \leq \frac12  \left\| |A|+ |B^*| \right\| \left\| |B|+ |A^*| \right\| ,$ which is also proved in \cite[Corollary 2.9]{Bhunia-AdM}.
Again, considering $C=D=0$ in Theorem \ref{th3}, we obtain the following numerical radius bound for the product of two operators.

\begin{cor}\label{cor6}
	If $A,B\in \mathcal{B}(\mathcal{H})$, then
	\begin{eqnarray*}
		w(AB) &\leq & \frac14  \left\| |A|^{2t}+ |B^*|^{2t} \right\| \left\|  |A^*|^{2(1-t)}+|B|^{2(1-t)} \right\|, \quad \text{for all $t\in [0,1].$ }
	\end{eqnarray*} 
\end{cor}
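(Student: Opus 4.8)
The plan is to read off Corollary \ref{cor6} as the degenerate case $C=D=0$ of Theorem \ref{th3}. Setting $C=D=0$ collapses $AB\pm CD$ to $AB$, so the left-hand side is exactly $w(AB)$. On the right-hand side the four operators $|C|,|D^*|,|C^*|,|D|$ appearing in the second minimum are all the zero operator, so for every $t\in(0,1]$ the factor $\||C|^{2t}+|D^*|^{2t}\|$ vanishes and the second minimum over $t\in[0,1]$ equals $0$. Thus Theorem \ref{th3} reduces to
\begin{eqnarray*}
w(AB) &\leq& \frac14\,\min_{t\in[0,1]}\left\||A|^{2t}+|B^*|^{2t}\right\|\,\left\||A^*|^{2(1-t)}+|B|^{2(1-t)}\right\|.
\end{eqnarray*}

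Next I would pass from this $\min$-form to the fixed-parameter statement. Since the minimum of a family of nonnegative numbers never exceeds any particular member, for each fixed $t\in[0,1]$ the right-hand side above is bounded by $\frac14\||A|^{2t}+|B^*|^{2t}\|\,\||A^*|^{2(1-t)}+|B|^{2(1-t)}\|$. Chaining the two inequalities yields $w(AB)\leq \frac14\||A|^{2t}+|B^*|^{2t}\|\,\||A^*|^{2(1-t)}+|B|^{2(1-t)}\|$ for every $t\in[0,1]$, which is the assertion.

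For an independent check that also explains the constant $\frac14$, set $T=\begin{bmatrix}0&A\\B&0\end{bmatrix}$, so that $T^2=\begin{bmatrix}AB&0\\0&BA\end{bmatrix}$ is block diagonal and hence $w(T^2)=\max\{w(AB),w(BA)\}\geq w(AB)$. The power inequality $w(T^2)\leq w^2(T)$ combined with the bound $w(T)\leq \frac12\||A|^{2t}+|B^*|^{2t}\|^{1/2}\,\||A^*|^{2(1-t)}+|B|^{2(1-t)}\|^{1/2}$ from Proposition \ref{prop5} gives $w(AB)\leq w^2(T)\leq \frac14\||A|^{2t}+|B^*|^{2t}\|\,\||A^*|^{2(1-t)}+|B|^{2(1-t)}\|$, recovering the same inequality directly. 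I expect no real obstacle beyond two bookkeeping points: verifying that the $C,D$ contribution genuinely vanishes (the only mild care being the value convention at $t=0$, which is harmless since any interior $t$ already forces the second minimum to $0$), and the passage from the $\min$-form of Theorem \ref{th3} to the fixed-$t$ form of the statement.
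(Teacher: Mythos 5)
Your proof is correct and takes the same route as the paper: the paper, too, obtains Corollary \ref{cor6} simply by putting $C=D=0$ in Theorem \ref{th3}, where the second minimum vanishes and the passage from the $\min$-form to a fixed $t\in[0,1]$ is immediate. Your independent check via $T=\left[\begin{smallmatrix}0&A\\ B&0\end{smallmatrix}\right]$, the power inequality $w(T^2)\le w^2(T)$, and Proposition \ref{prop5} is also valid, and is essentially the same squaring argument by which the paper proves Theorem \ref{th3} itself.
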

\begin{remark}
 Put $t=1$ and $t=0$, we get 
	$\max \{ w(AB), w(BA) \} \leq  \frac12 \left\|  |A|^2+ |B^*|^2 \right\|.$
 Also, for $t=\frac12$, we get
		$w(AB) \leq \frac14 \left\| |A|+ |B^*| \right\| \left\|  |A^*| +|B| \right\|,$ which is also proved in \cite[Corollary 2.10]{Bhunia-AdM}. 
\end{remark}

\begin{remark}
    (i)  Considering $B=U$, $C=V$ and $D=A$ in Theorem \ref{th3}, where $U$ and $V$ are unitary operators, we obtain 
	\begin{eqnarray*}
		w(AU\pm VA) 
  &\leq & \frac12  \min_{t\in [0,1]}  \left\| |A|^{2t}+ I \right\| \left\|  |A^*|^{2(1-t)}+I \right\|   . 
	\end{eqnarray*} 
 (ii)  Similarly, considering $B=U$, $C=V$ and $D=A^*$ in Theorem \ref{th3}, where $U$ and $V$ are unitary operators, we obtain 
	\begin{eqnarray*}
		w(AU\pm VA^*) 
  &\leq & \frac12 \min_{t\in [0,1]}  \left\| |A|^{2t}+ I \right\| \left\|  |A^*|^{2(1-t)}+I \right\|. 
	\end{eqnarray*} 
(iii) Considering $B=U$ in Corollary \ref{cor6}, where $U$ is an unitary operator, we obtain 
	\begin{eqnarray*}
		w(AU) 
  &\leq & \frac14 \min_{t\in [0,1]}  \left\| |A|^{2t}+ I \right\| \left\|  |A^*|^{2(1-t)}+I \right\|. 
	\end{eqnarray*} 
 
 \end{remark}

Finally, replacing $A$ and $B$ by $U|A|^{1/2}$ (where $A=U|A|$ is the polar decomposition) and $|A|^{1/2}$, respectively, in Corollary \ref{cor6}, we deduce the following numerical radius bound for a single bounded linear operator. 

\begin{prop}\label{prop1}
    If $A\in \mathcal{B}(\mathcal{H})$, then
	\begin{eqnarray*}
		w(A) &\leq &  \|A\|^t  \left\| { \frac12 \left(|A|^{1-t}+|A^*|^{1-t}\right) } \right\|, \quad \text{for all $t\in [0,1].$ }
	\end{eqnarray*} 
\end{prop}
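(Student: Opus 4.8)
The plan is to specialize Corollary~\ref{cor6} to a cleverly chosen pair of operators so that their product reproduces $A$ while the four moduli appearing on the right-hand side collapse to powers of $|A|$ and $|A^*|$. Following the hint preceding the statement, I would write the polar decomposition $A = U|A|$, where $U$ is the partial isometry whose initial space is $\overline{\mathrm{ran}\,|A|}$, and then apply Corollary~\ref{cor6} with the two factors $X := U|A|^{1/2}$ and $Y := |A|^{1/2}$ in place of the generic operators there. The immediate payoff is that $XY = U|A|^{1/2}|A|^{1/2} = U|A| = A$, so the left-hand side of Corollary~\ref{cor6} becomes exactly $w(A)$.

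The bulk of the work is then the bookkeeping of the four moduli $|X|,\,|X^*|,\,|Y|,\,|Y^*|$. Since $Y = |A|^{1/2}$ is positive, both $|Y|$ and $|Y^*|$ equal $|A|^{1/2}$. For $X$ one computes $|X|^2 = X^*X = |A|^{1/2}U^*U|A|^{1/2} = |A|$, using the key identity $U^*U|A|^{1/2} = |A|^{1/2}$ (because $U^*U$ is the orthogonal projection onto $\overline{\mathrm{ran}\,|A|} = \overline{\mathrm{ran}\,|A|^{1/2}}$), so that $|X| = |A|^{1/2}$. For the adjoint one computes $|X^*|^2 = XX^* = U|A|U^*$, and the same range identity gives $(U|A|U^*)^2 = U|A|^2U^* = AA^*$, whence the positive operator $U|A|U^*$ is the square root $|A^*|$ and $|X^*| = |A^*|^{1/2}$. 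Substituting, $|X|^{2t} = |Y^*|^{2t} = |A|^t$, $|Y|^{2(1-t)} = |A|^{1-t}$, and $|X^*|^{2(1-t)} = |A^*|^{1-t}$.

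Plugging these into Corollary~\ref{cor6} then yields
\[
w(A) \le \tfrac14 \bigl\| |A|^t + |A|^t \bigr\| \, \bigl\| |A^*|^{1-t} + |A|^{1-t} \bigr\| = \tfrac14 \cdot 2\,\bigl\| |A|^t \bigr\| \, \bigl\| |A|^{1-t} + |A^*|^{1-t} \bigr\|.
\]
Finally I would invoke the elementary norm identity $\bigl\| |A|^t \bigr\| = \|\,|A|\,\|^t = \|A\|^t$ (from the continuous functional calculus together with $\|\,|A|\,\| = \|A\|$) to reach $w(A) \le \tfrac12 \|A\|^t \bigl\| |A|^{1-t} + |A^*|^{1-t} \bigr\|$, which is the asserted inequality after pulling the factor $\tfrac12$ inside the norm.

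The only genuinely delicate point — and the step I expect to be the main obstacle — is justifying the two modulus computations $|X| = |A|^{1/2}$ and $|X^*| = |A^*|^{1/2}$ rigorously, that is, verifying the partial-isometry identities $U^*U|A|^{1/2} = |A|^{1/2}$ and $(U|A|U^*)^2 = AA^*$ in the case where $A$ need not be invertible, so that $U$ is merely a partial isometry rather than a unitary. Everything else reduces to the direct substitution above and the routine norm identity $\|\,|A|^t\,\| = \|A\|^t$.
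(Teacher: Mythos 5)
Your proposal is correct and is precisely the paper's own proof: the paper obtains Proposition~\ref{prop1} by substituting $U|A|^{1/2}$ and $|A|^{1/2}$ into Corollary~\ref{cor6}, exactly as you do, and the modulus computations you flag as the delicate point ($|X|=|A|^{1/2}$, $|X^*|=|A^*|^{1/2}$ via $U^*U|A|^{1/2}=|A|^{1/2}$ and the uniqueness of positive square roots applied to $(U|A|U^*)^2=AA^*$) are standard properties of the polar decomposition that your sketch already verifies correctly.
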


\begin{remark}
(i) Using the concavity property of $f(\lambda)=\lambda^{1-t}$, $\lambda \geq 0$, we see that $  \|A\|^t  \left\|   \frac12 \left(|A|^{1-t}+|A^*|^{1-t}\right)  \right\| \leq \|A\|^t \left\|   \frac12 \left(|A|^{}+|A^*|^{}\right)  \right\|^{1-t} \leq \|A\|,$   for all $t\in [0,1].$ 
 Hence, Proposition \ref{prop1} refines the bound $w(A) \leq \|A\|$ for every $t\in [0,1].$

 \noindent (ii)  Taking $t=1/2$ in Proposition \ref{prop1}, we get 
    $w(A) \leq  \|A\|^{1/2}   \left\|   \frac12 \left(|A|^{1/2}+|A^*|^{1/2}\right)  \right\|,$
 which is also proved by Kittaneh et al. \cite{Kittaneh_LAMA_2023} and also see in \cite{Bhu23_2}. Therefore, we obtain
 $$ w(A) \leq \min_{t\in [0,1]}  \|A\|^t  \left\|   \frac12 \left(|A|^{1-t}+|A^*|^{1-t}\right)  \right\| \leq  \|A\|^{1/2}   \left\|   \frac12 \left(|A|^{1/2}+|A^*|^{1/2}\right)  \right\|. $$
To show proper improvement we consider a matrix $A=\begin{bmatrix}
     0&2&0\\
     0&0&3\\
     0&0&0
 \end{bmatrix}.$ Then we see that $$ \min_{t\in [0,1]}    \|A\|^t  \left\|   \frac12 \left(|A|^{1-t}+|A^*|^{1-t}\right)  \right\|=\frac{5}{2} < \frac{3+\sqrt6}{2}=\|A\|^{1/2}   \left\|   \frac12 \left(|A|^{1/2}+|A^*|^{1/2}\right)  \right\|. $$

 \end{remark}

\bigskip
\noindent \textbf{Data availability statements}
Data sharing not applicable to this article as no datasets were generated or analysed during the current study.\\
\textbf{Competing Interests}
The author has no relevant financial or non-financial interests to disclose.

\bibliographystyle{amsplain}

\end{document}